\newtheorem{theorem}{Theorem}[section]
\newtheorem{lemma}[theorem]{Lemma}
\newtheorem{prop}[theorem]{Proposition}
\newcommand{\ds}{\displaystyle}
\newcommand{\p}{\partial}
\newcommand{\eqnref}[1]{(\ref {#1})}
\newcommand{\Cbb}{\mathbb{C}}
\newcommand{\Rbb}{\mathbb{R}}
\newcommand{\la}{\langle}
\newcommand{\ra}{\rangle}
\newcommand{\Hcal}{\mathcal{H}}
\newcommand{\Lcal}{\mathcal{L}}
\newcommand{\Scal}{\mathcal{S}}
\def\BA{{\bf A}}
\def\BB{{\bf B}}
\def\BI{{\bf I}}
\def\BK{{\bf K}}
\def\BR{{\bf R}}
\def\BT{{\bf T}}
\newcommand{\Ga}{\alpha}
\newcommand{\Gb}{\beta}
\newcommand{\Gd}{\delta}
\newcommand{\Gvf}{\varphi}
\newcommand{\Gl}{\lambda}
\newcommand{\Gm}{\mu}
\newcommand{\Gs}{\sigma}
\newcommand{\GG}{\Gamma}
\newcommand{\GO}{\Omega}
\newcommand{\BGG}{{\bf \GG}}
\newcommand{\beq}{\begin{equation}}
\newcommand{\eeq}{\end{equation}}
\def\ol{\overline}
\newcommand{\hatna}{\widehat{\nabla}}
\numberwithin{equation}{section}
\numberwithin{figure}{section}
\begin{document}

\title{Elastic Neumann--Poincar\'e operators on three dimensional smooth domains: Polynomial compactness and spectral structure\thanks{\footnotesize This work is supported by A3 Foresight Program among China (NSF), Japan (JSPS), and Korea (NRF 2014K2A2A6000567). Work of HK is supported by NRF 2016R1A2B4011304}}

\author{Kazunori Ando\thanks{Department of Electrical and Electronic Engineering and Computer Science, Ehime University, Ehime 790-8577, Japan. Email: {\tt ando@cs.ehime-u.ac.jp}.} \and Hyeonbae Kang\thanks{Department of Mathematics, Inha University, Incheon 402-751, S. Korea. Email: {\tt hbkang@inha.ac.kr}.} \and Yoshihisa Miyanishi\thanks{Center for Mathematical Modeling and Data Science, Osaka University, Osaka 560-8631, Japan. Email: {\tt miyanishi@sigmath.es.osaka-u.ac.jp}.}}

\date{}
\maketitle

\begin{abstract}
We prove that the elastic Neumann--Poincar\'e operator defined on the smooth boundary of a bounded domain in three dimensions, which is known to be non-compact, is in fact polynomially compact. As a consequence, we prove that the spectrum of the elastic Neumann-Poincar\'e operator consists of three non-empty sequences of eigenvalues accumulating to certain numbers determined by Lam\'e parameters. These results are proved using the surface Riesz transform, calculus of pseudo-differential operators and the spectral mapping theorem.
\end{abstract}

\noindent{\footnotesize {\bf AMS subject classifications}. 35J47 (primary), 35P05 (secondary)}

\noindent{\footnotesize {\bf Key words}. Neumann--Poincar\'e operator, Lam\'e system, polynomial compactness, spectrum, surface Riesz transform, pseudo-differential operator}

%\tableofcontents

%%%%%%%%%%%%%%%%%%%%%%%%%%%%
\section{Introduction}
%%%%%%%%%%%%%%%%%%%%%%%%%%%%

The Neumann--Poincar\'e (abbreviated by NP) operator is a boundary integral operator which appears naturally when solving classical boundary value problems using layer potentials. Its study (for the Laplace operator) goes back to C. Neumann \cite{Neumann-87} and Poincar\'e \cite{Poincare-AM-87} as the name of the operator suggests. If the boundary of the domain, on which the NP operator is defined, is smooth, then the NP operator is compact. So the Fredholm integral equation, which appears when solving Dirichlet or Neumann problems, can be solved using the Fredholm index theory \cite{Fredholm-03}. If the domain has corners, the NP operator is not any more a compact operator, but a singular integral operator. The solvability of the corresponding integral equation was established in \cite{Verch-JFA-84}.

Recently interest in the NP operator, especially in the spectral properties, is growing rapidly, which is due to its connection to plasmon resonance and anomalous localized resonance on meta materials of negative dielectric constants. These resonances occur at eigenvalues and at the accumulation point of eigenvalues of the NP operator, respectively (see \cite{ACKLM-ARMA-13, MFZ-PR-05} and references therein). The spectral nature of the NP operator is also related to stress concentration between hard inclusions \cite{BT-ARMA-13}.

Regarding spectral properties of the NP operator, it is proved in \cite{KPS-ARMA-07} that the NP operator can be realized as a self-adjoint operator by introducing a new inner product on the $H^{-1/2}$-space (see also \cite{KKLSY-JLMS-16}), and so the NP spectrum consists of continuous spectrum and discrete spectrum (and possibly the limit points of discrete spectrum). If the domain has the smooth boundary, then the spectrum consists of eigenvalues converging to $0$. We refer to \cite{AKM-arXiv, MS-TAMS-17} for progress on the convergence rate of NP eigenvalues. If the domain has corners, the corresponding NP operator may exhibit a continuous spectrum (as well as eigenvalues). For recent development in this direction we refer to \cite{HKL-AIHP-17, KLY, PP-JAM-14, PP-arXiv}.

The NP operator for the Lam\'e operator (elastic NP operator) also appears naturally when solving the boundary value problems for the Lam\'e system \cite{DKV-Duke-88, Kup-book-65}. It is equally important to develop the elastic NP spectral theory for investigation of the elasticity analogy of plasmon resonance and stress concentration. In a recent paper \cite{AJKKY-arXiv} it is shown that the elastic NP operator can be symmetrized by introducing a new inner product on the $H^{-1/2}$-space using the single layer potential. However, there is a significant difference between the electro-static NP operator and the elastic one: The elastic one is \emph{not} compact unlike the electro-static one \cite{DKV-Duke-88}. So it is not clear how the elastic NP spectrum looks like. But, it turns out that the elastic NP operator in two dimensions is polynomially compact if the domain is smooth \cite{AJKKY-arXiv}. An operator $\BA$ is said to be polynomially compact if there is a polynomial $p$ such that $p(\BA)$ is compact. As a consequence it is proved in the same paper that the elastic NP spectrum consists of eigenvalues converging to certain numbers determined by Lam\'e parameters (see the next section).

The purpose of this paper is to extend the results in two dimensions to three dimensions. We prove that the elastic NP operator is polynomially compact even in three dimensions and, as a consequence, that the spectrum consists of eigenvalues converging to three different numbers determined by Lam\'e parameters. We emphasize that the method for two dimensions cannot be applied to three dimensions since it uses the notion of harmonic conjugates and the Hilbert transform. The proof of this paper relies on the surface Riesz transform and calculus of pseudo-differential operators (abbreviated by $\psi$DO). Recently, eigenvalues of the elastic NP operator on three dimensional balls are derived explicitly in \cite{DLL-preprint}, which is in accordance with the results of this paper. 

This paper is organized as follows. In the next section we state the main results of this paper in a precise manner. In section \ref{sec:Riesz}, we show that the elastic NP operator can be well approximated by surface Riesz transforms. Section \ref{sec:symbol} is to compute principal symbols of relevant $\psi$DOs. Proofs of the main theorems are provided in section \ref{sec:proof}. This paper ends with a brief conclusion.

%%%%%%%%%%%%%%%%%%%%%%%%%%%%%%%%%%%%%%
\section{Statements of results}\label{sec:state}
%%%%%%%%%%%%%%%%%%%%%%%%%%%%%%%%%%%%%%

Let $\GO$ be a bounded domain in $\Rbb^3$ whose boundary $\p\GO$ is $C^\infty$-smooth. Let $(\Gl, \Gm)$ be the Lam\'e constants for $\GO$ satisfying the strong convexity condition: $\Gm > 0$ and $3\Gl + 2\Gm > 0$. The isotropic elasticity tensor $\Cbb = ( C_{ijkl} )_{i, j, k, l = 1}^3$ and the corresponding Lam\'e system $\Lcal_{\Gl,\Gm}$ are defined by
\beq
C_{ijkl} := \Gl \, \Gd_{ij} \Gd_{kl} + \mu \, ( \Gd_{ik} \Gd_{jl} + \Gd_{il} \Gd_{jk} )
\eeq
and
\beq
\Lcal_{\Gl,\Gm} u := \nabla \cdot \Cbb \hatna u = \Gm \Delta u + (\Gl + \Gm) \nabla \nabla \cdot u,
\eeq
where $\hatna$ denotes the symmetric gradient, namely,
\beq
\hatna u := \frac{1}{2} \left( \nabla u + \nabla u^T \right) \quad (T \mbox{ for transpose}). \nonumber
\eeq
The corresponding conormal derivative on $\p \GO$ is defined to be
\beq\label{conormal}
\p_\nu u := (\Cbb \hatna u) n = \Gl (\nabla \cdot u) n + 2\Gm (\hatna u) n \quad \mbox{on } \p \GO,
\eeq
where $n$ is the outward unit normal to $\p \GO$.

Let $\BGG(x) = \left( \Gamma_{ij}(x) \right)_{i, j = 1}^d$ be the Kelvin matrix of the fundamental solution to the Lam\'{e} operator $\Lcal_{\Gl, \mu}$, namely,
\beq\label{Kelvin}
  \Gamma_{ij}(x) =
    - \ds \frac{\Ga_1}{4 \pi} \frac{\Gd_{ij}}{|x|} - \frac{\Ga_2}{4 \pi} \ds \frac{x_i x_j}{|x|^3}, \quad x \neq 0,
\eeq
where
\beq
  \Ga_1 = \frac{1}{2} \left( \frac{1}{\mu} + \frac{1}{2 \mu + \Gl} \right) \quad\mbox{and}\quad \Ga_2 = \frac{1}{2} \left( \frac{1}{\mu} - \frac{1}{2 \mu + \Gl} \right).
\eeq
The NP operator for the Lam\'e system is defined by
\beq\label{BK}
\BK [f] (x) := \mbox{p.v.} \int_{\p \GO} \p_{\nu_x} {\bf \GG} (x-y) f(y) d \Gs(y) \quad \mbox{a.e. } x \in \p \GO.
\eeq
Here, p.v. stands for the Cauchy principal value, and the conormal derivative $\p_{\nu_x}\BGG (x-y)$ of the Kelvin matrix with respect to $x$-variables is defined by
\beq\label{kerdef}
\p_{\nu_x}\BGG (x-y) b = \p_{\nu_x} (\BGG (x-y) b)
\eeq
for any constant vector $b$.

Even though the NP operator $\BK$ is not self-adjoint with respect to the usual $L^2$-inner product, it can be symmetrized on $\Hcal:=H^{-1/2}(\p\GO)^3$ (the Sobolev $-1/2$-space on $\p\GO$) by Plemelj's symmetrization principle (see \cite{AJKKY-arXiv}). So, the NP operator, as a self-adjoint operator on the Hilbert space $\Hcal$, may have continuous and discrete spectra.

In the electro-static case, the NP operator on smooth domains ($C^{1,\Ga}$-smooth to be precise) is compact, and its spectrum consists of eigenvalues (of finite multiplicities) converging to $0$. However, it is known that the elastic NP operator is \emph{not} compact even on smooth domains, which is due to the off-diagonal entries of the Kelvin matrix $\BGG(x)$ (see \cite{DKV-Duke-88}). So, it is not clear how the spectrum of the elastic NP operator looks like. As mentioned before, it is proved in \cite{AJKKY-arXiv} that the elastic NP operator in two dimensions is \emph{polynomially compact}, even though not compact. In fact, it is proved that if the polynomial $p(t)$ is given by $p_2(t)=t^2 - k_0^2$, where the constant $k_0$ is given by
\beq\label{kzero}
k_0= \frac{\mu}{2(2\mu+\Gl)},
\eeq
then $p_2(\BK)$ is a compact operator. As a consequence it is shown that the elastic NP spectrum in two dimensions consists of two non-empty sequences of eigenvalues converging to $k_0$ and $-k_0$, respectively.

The three dimensional case is not resolved yet since the method in two dimensions relies on the existence of conjugate harmonic functions and cannot be applied to three dimensions. In this paper we develop a new method to show polynomial compactness of $\BK$ which can be applied to three dimensions (as well as two dimensions). We obtain the following theorem.
\begin{theorem}\label{Polynomially compact}
Let $\GO$ be a smooth bounded domain in $\Rbb^3$, $\BK$ be the elastic NP operator on $\p\GO$, and let $p_3(t)=t(t^2-k_0^2)$ where $k_0$ is given by \eqnref{kzero}.
Then $p_3(\BK)$ is compact. Moreover, $\BK (\BK-k_0 \BI)$, $\BK (\BK + k_0 \BI)$ and $\BK^2 - k_0^2 \BI$ are \emph{not} compact.
\end{theorem}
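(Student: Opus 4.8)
The plan is to realize the elastic NP operator $\BK$, modulo compact operators, as a matrix-valued pseudo-differential operator of order $0$ on $\p\GO$, and then reduce the claimed spectral/compactness statements to the behavior of its principal symbol. First I would use the result announced for section~\ref{sec:Riesz}, namely that $\BK$ is well approximated (up to a compact error) by an expression built from the surface Riesz transforms $R_j$ on $\p\GO$. Since each $R_j$ is a classical $\psi$DO of order $0$, the approximant $\BK_0$ is a $0$-order matrix $\psi$DO, and its principal symbol $\Bsigma(\BK_0)(x,\xi)$ is an explicit $3\times 3$ (or, after accounting for the tangential/normal splitting of $\Rbb^3$, effectively controlled) matrix-valued function, homogeneous of degree $0$ in $\xi$, that I would compute using section~\ref{sec:symbol}. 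The key algebraic fact I expect to extract is that, at every cotangent point $(x,\xi)$ with $\xi\ne 0$, the matrix $\Bsigma(\BK)(x,\xi)$ has exactly the three eigenvalues $0$, $k_0$, and $-k_0$ (each with constant multiplicity), so that $p_3(\Bsigma(\BK)(x,\xi)) = \Bsigma(\BK)(\Bsigma(\BK)^2 - k_0^2 I) = 0$ identically.

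From there, compactness of $p_3(\BK)$ follows from the symbol calculus: $p_3(\BK)$ is a $0$-order $\psi$DO whose principal symbol is $p_3(\Bsigma(\BK))=0$, hence $p_3(\BK)$ has order $-1$, and a $\psi$DO of negative order on a compact manifold is compact on $L^2(\p\GO)^3$ (hence, after transferring through the symmetrization and using that the $H^{-1/2}$ and $L^2$ topologies differ by an elliptic isomorphism, compact on $\Hcal$). One must also verify that the compact error $\BK - \BK_0$ does not spoil this: since $p_3$ is a fixed cubic, $p_3(\BK) - p_3(\BK_0)$ is a finite sum of products each containing at least one factor of the compact operator $\BK - \BK_0$, hence compact; this is where I would be a little careful, because products of a compact operator with unbounded-looking $\psi$DO factors are fine only because all the $\psi$DO factors here are bounded of order $0$.

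For the second, ``non-compactness'' half, the strategy is to show that none of the quadratic factors $\BK(\BK-k_0\BI)$, $\BK(\BK+k_0\BI)$, $\BK^2-k_0^2\BI$ is compact by exhibiting, for each, a point $(x_0,\xi_0)$ in the cosphere bundle at which the corresponding quadratic polynomial of the principal symbol $\Bsigma(\BK)(x_0,\xi_0)$ is nonzero. Concretely, $q(t)=t(t-k_0)$ fails to vanish on $\{0,k_0,-k_0\}$ only at $t=-k_0$, so it suffices that the eigenvalue $-k_0$ actually occurs in the symbol (with nonzero associated spectral projection) at some point; similarly $t(t+k_0)$ detects the eigenvalue $k_0$, and $t^2-k_0^2$ detects the eigenvalue $0$. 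Once the symbol is nonzero at a point, the operator has a nonvanishing principal symbol of order $0$ and therefore cannot be compact --- a clean way to see this is that a compact $\psi$DO would have to have identically vanishing principal symbol (e.g.\ because the operator norm dominates the sup of the symbol up to lower order, or via a Weyl-sequence argument using symbols concentrated near $(x_0,\xi_0)$). So the whole ``not compact'' claim reduces to the single verification, already implicit in the eigenvalue computation, that all three values $0$, $k_0$, $-k_0$ genuinely appear in the spectrum of $\Bsigma(\BK)(x,\xi)$ --- which is exactly what forces the minimal polynomial to be the cubic $p_3$ rather than a proper factor.

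The main obstacle I anticipate is the symbol computation itself: one must honestly carry out the reduction of $\BK$ to surface Riesz transforms (controlling the off-diagonal Kelvin-matrix terms responsible for non-compactness), set up coordinates on $\p\GO$ adapted to the normal/tangent splitting, and then diagonalize the resulting $0$-order matrix symbol to read off that its eigenvalues are precisely $0$, $k_0$, $-k_0$ with the right multiplicities uniformly in $(x,\xi)$. Everything after that --- compactness of negative-order $\psi$DOs, stability under the compact perturbation $\BK-\BK_0$, and the non-compactness criterion via nonvanishing principal symbol --- is standard $\psi$DO machinery; the arithmetic linking $k_0 = \mu/(2(2\mu+\lambda))$ to the eigenvalues of the symbol is the crux, and getting the constants exactly right (matching the two-dimensional value \eqnref{kzero} and the explicit ball computation of \cite{DLL-preprint}) is the delicate bookkeeping.
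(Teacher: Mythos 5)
Your proposal is essentially the same approach the paper takes: reduce $\BK$ modulo compacts to a $0$th-order matrix $\psi$DO built from surface Riesz transforms (Section~\ref{sec:Riesz}), compute the principal symbol in local coordinates (Section~\ref{sec:symbol}), and then read off polynomial compactness from the symbol calculus (a negative-order $\psi$DO on a compact manifold is compact) and non-compactness from the nonvanishing of the symbol of the quadratic factors. The only real difference is framing: you phrase the algebraic core as "the $3\times 3$ symbol matrix has eigenvalues $0,\,k_0,\,-k_0$ of constant multiplicity," whereas the paper avoids diagonalizing the symbol and instead exploits that the localized symbol matrix is $-i$ times a real antisymmetric matrix, applies Cayley--Hamilton to get $\BR^3+(X_{12}^2+X_{13}^2+X_{23}^2)\BR\equiv 0$, and verifies the scalar identity $X_{12}^2+X_{13}^2+X_{23}^2\equiv -\widetilde{M}_1^2$ directly from Lemma~\ref{symbolT}; for the non-compactness half, the paper takes the trace of the matrix symbol of each quadratic factor and checks that the resulting scalar symbol is nonzero, which is a slightly slicker way of showing the factor has a nonvanishing symbol than exhibiting a spectral projection. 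Both routes are correct and genuinely equivalent (the Cayley--Hamilton identity $t^3+\omega^2 t=0$ for an antisymmetric $3\times 3$ matrix with $\omega^2=\sum a_{ij}^2$ is exactly the eigenvalue statement $\{0,\pm i\omega\}$). The one thing to flag is that what you call ``the crux'' --- the actual verification that the sum of squares of the off-diagonal symbols equals $1$ on the relevant coordinate patch, which is what pins down the constant $k_0=\mu/(2(2\mu+\lambda))$ --- is precisely the content of Lemmas~\ref{symbolRG}--\ref{BTM compact} and is not at all a bookkeeping afterthought; your plan correctly identifies it but defers it, so as written the proposal is a faithful sketch of the paper's argument rather than a complete alternative proof.
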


As a consequence of Theorem \ref{Polynomially compact} and the spectral mapping theorem (see \cite{RS-book-80}), we obtain the following theorem on the spectral structure of the elastic NP operator.

\begin{theorem}\label{mainthm1}
Let $\GO$ be a smooth bounded domain in $\Rbb^3$ and $\BK$ be the elastic NP operator on $\p\GO$. The spectrum of $\BK$ consists of three non-empty sequences of eigenvalues  which converge to $0$, $k_0$ and $-k_0$, respectively.
\end{theorem}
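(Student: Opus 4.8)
The plan is to deduce Theorem~\ref{mainthm1} from Theorem~\ref{Polynomially compact}, the self-adjointness of $\BK$ on $\Hcal=H^{-1/2}(\p\GO)^3$ (via Plemelj's symmetrization, \cite{AJKKY-arXiv}), and the polynomial spectral mapping theorem $\sigma(p(\BA))=p(\sigma(\BA))$. Since $\BK$ is bounded and self-adjoint, $\sigma(\BK)\subseteq\Rbb$, and since $p_3(\BK)$ is compact and self-adjoint on the infinite-dimensional space $\Hcal$, we have $\sigma(p_3(\BK))=\{0\}\cup\{\mu_j\}_j$, where the $\mu_j$ are the nonzero eigenvalues of $p_3(\BK)$, each of finite multiplicity, with $\mu_j\to 0$ if there are infinitely many. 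By spectral mapping, $p_3(\sigma(\BK))=\sigma(p_3(\BK))$, so $\sigma(\BK)\subseteq\{0,k_0,-k_0\}\cup\bigcup_j p_3^{-1}(\{\mu_j\})$. Each $p_3^{-1}(\{\mu_j\})$ has at most three points, and the three roots of $p_3(t)=\mu_j$ tend to $0$, $k_0$, $-k_0$ as $\mu_j\to 0$; hence $\sigma(\BK)\setminus\{0,k_0,-k_0\}$ is a discrete set whose only possible accumulation points are $0$, $k_0$ and $-k_0$.

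Next I would check that each $\mu\in\sigma(\BK)\setminus\{0,k_0,-k_0\}$ is an eigenvalue of $\BK$ of finite multiplicity. Since $p_3(\mu)\neq 0$, it is an eigenvalue of the compact operator $p_3(\BK)$, so $E:=\ker(p_3(\BK)-p_3(\mu)\BI)$ is finite-dimensional; as $\BK$ commutes with $p_3(\BK)$, both $E$ and $E^\perp$ are $\BK$-invariant. On $E$, $\BK$ restricts to a self-adjoint operator on a finite-dimensional space, diagonalizable with eigenvalues among the (at most three) roots of $p_3(t)=p_3(\mu)$. On $E^\perp$, $p_3(\BK)-p_3(\mu)\BI$ is invertible (since $p_3(\mu)$ is an isolated point of $\sigma(p_3(\BK))$ whose spectral subspace is exactly $E$), so factoring $p_3(t)-p_3(\mu)$ into linear factors and using that the corresponding commuting factors of $\BK|_{E^\perp}$ multiply to an invertible operator, we find $\BK|_{E^\perp}-\mu\BI$ invertible; thus $\mu\in\sigma(\BK|_E)$ and is an eigenvalue of finite multiplicity. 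In particular $\sigma_{\mathrm{ess}}(\BK)\subseteq\{0,k_0,-k_0\}$.

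The key remaining step is the reverse inclusion $\{0,k_0,-k_0\}\subseteq\sigma_{\mathrm{ess}}(\BK)$, where the non-compactness assertions of Theorem~\ref{Polynomially compact} enter. If, say, $k_0\notin\sigma_{\mathrm{ess}}(\BK)$, then the spectral projection $Q$ of $\BK$ for $\{k_0\}$ has finite rank, and on the $\BK$-invariant splitting $\Hcal=\mathrm{ran}(Q)\oplus\mathrm{ran}(\BI-Q)$ we have $\BK|_{\mathrm{ran}(Q)}=k_0\BI$ while $\BK|_{\mathrm{ran}(\BI-Q)}-k_0\BI$ is invertible. Writing $p_3(\BK)=(\BK-k_0\BI)\bigl(\BK(\BK+k_0\BI)\bigr)$, on $\mathrm{ran}(\BI-Q)$ we get $\BK(\BK+k_0\BI)=(\BK-k_0\BI)^{-1}p_3(\BK)$, which is compact, while on the finite-dimensional space $\mathrm{ran}(Q)$ it equals $2k_0^2\BI$ and is of finite rank; hence $\BK(\BK+k_0\BI)$ would be compact, contradicting Theorem~\ref{Polynomially compact}. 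Permuting the factors and arguing in the same way with the non-compactness of $\BK(\BK-k_0\BI)$ and of $\BK^2-k_0^2\BI=(\BK-k_0\BI)(\BK+k_0\BI)$ gives $-k_0\in\sigma_{\mathrm{ess}}(\BK)$ and $0\in\sigma_{\mathrm{ess}}(\BK)$.

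Putting the pieces together, $\sigma_{\mathrm{ess}}(\BK)=\{0,k_0,-k_0\}$: every point of $\sigma(\BK)$ off this set is a finite-multiplicity eigenvalue, and such eigenvalues accumulate only at $0$, $k_0$, $-k_0$; and since each of these three points belongs to $\sigma_{\mathrm{ess}}(\BK)$, it is not an isolated eigenvalue of finite multiplicity, hence is either an accumulation point of eigenvalues of $\BK$ or itself an eigenvalue of infinite multiplicity, so the corresponding sequence of eigenvalues is non-empty. This exhibits $\sigma(\BK)$ as three non-empty sequences of eigenvalues converging to $0$, $k_0$ and $-k_0$, which is Theorem~\ref{mainthm1}. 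I do not expect a substantive obstacle here, since the analytic content is entirely contained in Theorem~\ref{Polynomially compact}; the only delicate point is the bookkeeping in the essential-spectrum step, namely fitting the finite-rank perturbation argument together with the spectral mapping theorem.
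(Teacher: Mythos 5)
Your proposal is correct and follows the same route as the paper: self-adjointness of $\BK$ on $\Hcal$, the spectral mapping theorem applied to the compact operator $p_3(\BK)$ to locate $\sigma(\BK)$ near $\{0,\pm k_0\}$, and a contradiction argument using the three non-compactness assertions of Theorem~\ref{Polynomially compact} to show each accumulation point is actually attained. You have in fact filled in a step the paper states in one line without justification — namely the implication that if one of $0,\pm k_0$ were outside $\sigma_{\mathrm{ess}}(\BK)$ then the corresponding quadratic factor ($\BK(\BK+k_0\BI)$, $\BK(\BK-k_0\BI)$, or $\BK^2-k_0^2\BI$) would be compact — by the spectral-projection and factorization argument, and you have also spelled out why points of $\sigma(\BK)\setminus\{0,\pm k_0\}$ are eigenvalues of finite multiplicity, which the paper asserts without proof.
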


Let us briefly describe the idea to prove Theorem \ref{Polynomially compact}. If $\GO$ is the half-space (even if it is not a bounded domain), then the non-compact part of $\BK$ consists of Riesz transforms (see \eqnref{halfspace}). Since the sum of squares of the Riesz transforms is $-\BI$, one can easily see that $\BK (\BK^2 - k_0^2 \BI)$ is compact. For bounded domains we show that the non-compact part of $\BK$ can be written in terms of surface Riesz transforms which is defined by the metric tensor of the surface $\p\GO$. We then show that the sum of squares of some variances of the surface Riesz transforms is $-\BI$ modulo a compact operator using calculus of pseudo-differential operators.

%%%%%%%%%%%%%%%%%%%%%%%%%%%%%%%%%%%%%%%%%%%%%
\section{NP operators and surface Riesz transforms}\label{sec:Riesz}
%%%%%%%%%%%%%%%%%%%%%%%%%%%%%%%%%%%%%%%%%%%%%

Straightforward computations using the definition \eqnref{kerdef} yield that
\beq
\p_{\nu_x}\BGG(x-y)= k_0 \BK_1(x,y) - \BK_2(x,y),
\eeq
where
\begin{align}
\BK_1(x,y) &= \frac{n_x (x-y)^T - (x-y) n_x^T}{2\pi |x-y|^{3}} ,  \\
\BK_2(x,y) &= \frac{\mu}{2\mu+\Gl} \frac{(x-y) \cdot n_x }{4\pi |x-y|^3} \BI + \frac{2(\mu+ \Gl)}{2\mu+\Gl} \frac{( x-y) \cdot n_x  }{4\pi |x-y|^{5}} (x-y)(x-y)^T .
\end{align}
Here $\BI$ is the $d \times d$ identity matrix. Since $\p\GO$ is smooth, we have
$$
|(x-y) \cdot n_x| \le C |x-y|^2
$$
for some constant $C$. So $\BK_2$ is weakly singular, namely,
$$
|\BK_2(x, y)| \le C |x-y|^{-1},
$$
and hence the integral operator defined by $\BK_2$ is compact on $\Hcal$. Let
\beq
\BT [f](x):= \text{p.v.} \int_{\p\GO} \BK_1(x,y) f(y) \, d \Gs(y), \quad x \in \p\GO.
\eeq
Then we have
\beq\label{BKBT}
\BK \equiv k_0 \BT.
\eeq
Here and throughout the expression $\BA \equiv \BB$ for operators $\BA$ and $\BB$ on $\Hcal$ indicates that $\BA-\BB$ is compact on $\Hcal$. We emphasize that $\BT$ is a singular integral operator and bounded on $\Hcal$ as well as on $L^2(\p\GO)^3$ (see \cite{CMM-AM-82}).

Denoting $n_x=(n_1(x), n_2(x), n_3(x))^T$, we have
\beq
\BK_1(x,y) = \frac{1}{2\pi |x-y|^{3}}
\begin{bmatrix}
0 & K_{12} (x, y) & K_{13} (x, y) \\
- K_{12} (x, y) & 0 & K_{23} (x, y) \\
- K_{13} (x, y) & - K_{23} (x, y) & 0
\end{bmatrix},
\eeq
where
\begin{align*}
K_{12} (x, y) &= n_1(x)(x_2-y_2) - n_2(x) (x_1-y_1), \\
K_{13} (x, y) &= n_1(x)(x_3-y_3) - n_3(x) (x_1-y_1), \\
K_{23} (x, y) &= n_2(x)(x_3-y_3) - n_3(x) (x_2-y_2) .
\end{align*}
Let
\beq
T_{ij}[f] (x) := \int_{\p\GO} \frac{K_{ij} (x, y)}{2\pi |x-y|^{3}} f(y) d\Gs(y),
\eeq
so that
\beq
\BT =
\begin{bmatrix}
0 & T_{12} & T_{13} \\
- T_{12} & 0 & T_{23} \\
- T_{13} & - T_{23} & 0
\end{bmatrix}.
\eeq

It is helpful to look into the case when $\GO$ is the upper half-space even if it is not a bounded domain. In this case, $n_x= (0,0,-1)^T$, and hence
\beq\label{halfspace}
\BT = \begin{bmatrix}
0 & 0 & R_1 \\
0 & 0 & R_2 \\
- R_1 & - R_2 & 0
\end{bmatrix},
\eeq
where $R_j$ is the Riesz transform, {\it i.e.},
$$
R_j [f](x_1,x_2) = \frac{1}{2\pi} \int_{\Rbb^2} \frac{x_j-y_j}{[(x_1-y_1)^2 + (x_2-y_2)^2]^{3/2}} f(y_1,y_2) \, dy_1dy_2, \quad j=1,2.
$$
Since $R_1^2+R_2^2=-I$ (see \cite{SW-book}), one can see easily from \eqnref{halfspace} that $\BT^3-\BT=0$.

We show that the operator $\BT$ on a bounded surface $\p\GO$ can be well approximated by the surface Riesz transforms which will be defined later in \eqnref{Rgdef}. To do so, let $U$ be a coordinate chart in $\p\GO$ so that there is an open set $D$ in $\Rbb^2$ and a parametrization $\Phi: D \to U$, namely,
\beq
x= \Phi(u)= (\Gvf_1(u), \Gvf_2(u), \Gvf_3(u)), \quad x \in U, \ \ u \in D.
\eeq
Then the metric tensor of the surface, denoted by $G(u)= (g_{ij}(u))_{i,j=1}^2$, is given by
\begin{align*}
dx_1^2+dx_2^2+dx_3^2
=g_{11} du_1^2 + 2 g_{12} du_1 du_2 + g_{22} du_2^2,
\end{align*}
where
\beq
g_{11}= |\p_1 \Phi|^2, \quad g_{12}=g_{21}=\p_1\Phi \cdot \p_2 \Phi, \quad g_{22}=|\p_2 \Phi|^2.
\eeq
Here and afterwards, $\p_j$ denotes the $j$-th partial derivative. In short, we have
\beq
G(u)= D\Phi(u)^T D\Phi(u),
\eeq
where $D\Phi$ is the ($3 \times 2$) Jacobian matrix of $\Phi$.

If $x, y\in U$ are given by $x=\Phi(u)$ and $y=\Phi(v)$, then one can see from the Taylor expansion that
\begin{align*}
|x-y|^2 &= |\Phi(u)-\Phi(v)|^2 \\
& = \la D\Phi(u)(u-v), D\Phi(u) (u-v) \ra +O(|u-v|^3).
\end{align*}
In short, we have
\beq
|x-y|^2 = \la u-v, G(u) (u-v) \ra +O(|u-v|^3).
\eeq
Thus we have
\beq\label{BxBy-3}
|x-y|^{-3} = L(u, u-v)+ O(|u-v|^{-2}),
\eeq
where
\beq
L(u, u-v) = \la u-v, G(u) (u-v) \ra^{-3/2}.
\eeq

We now show that if $x=\Phi(u)$, $y=\Phi(v) \in U$, then the following identities hold modulo $O(|u-v|^2)$ terms
\begin{align}
K_{12}(\Phi(u), \Phi(v)) &= \frac{(g_{11} \p_2 \Gvf_3  - g_{12} \p_1 \Gvf_3)(u) (u_1-v_1) - (g_{22} \p_1 \Gvf_3 - g_{21}\p_2\Gvf_3)(u)(u_2-v_2)}{|(\p_1 \Phi \times \p_2 \Phi)(u)|}, \label{K12} \\
K_{13}(\Phi(u), \Phi(v)) &= \frac{(g_{11} \p_2 \Gvf_2  - g_{12} \p_1 \Gvf_2)(u) (u_1-v_1) - (g_{22} \p_1 \Gvf_2 - g_{21}\p_2\Gvf_2)(u)(u_2-v_2)}{|(\p_1 \Phi \times \p_2 \Phi)(u)|}, \label{K13} \\
K_{23}(\Phi(u), \Phi(v)) &= \frac{(g_{11} \p_2 \Gvf_1  - g_{12} \p_1 \Gvf_1)(u) (u_1-v_1) - (g_{22} \p_1 \Gvf_1 - g_{21}\p_2\Gvf_1)(u)(u_2-v_2)}{|(\p_1 \Phi \times \p_2 \Phi)(u)|}.  \label{K23}
\end{align}

To prove above identities, we first note that the unit normal vector $n_x$ is given by $(\p_1 \Phi \times \p_2 \Phi)(u)/ |(\p_1 \Phi \times \p_2 \Phi)(u)|$.
So we have, modulo $O(|u-v|^2)$ terms,
\begin{align*}
& |(\p_1 \Phi \times \p_2 \Phi)(u)| K_{12}(\Phi(u), \Phi(v)) \\
&= (\Gvf_2(u)-\Gvf_2(v)) (\p_1 \Gvf_2 \p_2 \Gvf_3 - \p_1 \Gvf_3 \p_2 \Gvf_2)(u) \\
& \qquad - (\Gvf_1(u)-\Gvf_1(v))(\p_1 \Gvf_3 \p_2 \Gvf_1 - \p_1 \Gvf_1 \p_2 \Gvf_3)(u) \\
&= \nabla \Gvf_2(u) \cdot (u-v) (\p_1 \Gvf_2 \p_2 \Gvf_3 - \p_1 \Gvf_3 \p_2 \Gvf_2)(u) \\
& \qquad - \nabla \Gvf_1(u) \cdot (u-v)(\p_1 \Gvf_3 \p_2 \Gvf_1 - \p_1 \Gvf_1 \p_2 \Gvf_3)(u) \\
& = \left[ \p_2 \Gvf_3 ( (\p_1 \Gvf_1)^2 + (\p_1 \Gvf_2)^2)- \p_1 \Gvf_3 (\p_1 \Gvf_2 \p_2 \Gvf_2 + \p_1 \Gvf_1 \p_2 \Gvf_1) \right](u) (u_1 - v_1)  \\
& \qquad - \left[ \p_1 \Gvf_3 ( (\p_2 \Gvf_1)^2 + (\p_2 \Gvf_2)^2)- \p_2 \Gvf_3 (\p_1 \Gvf_2 \p_2 \Gvf_2 + \p_1 \Gvf_1 \p_2 \Gvf_1) \right](u) (u_2 - v_2)  \\
& = \left[ \p_2 \Gvf_3 ( g_{11} - (\p_1 \Gvf_3)^2 )- \p_1 \Gvf_3 (g_{12} - \p_1 \Gvf_3 \p_2 \Gvf_3 ) \right](u) (u_1 - v_1)  \\
& \qquad - \left[ \p_1 \Gvf_3 ( g_{22} - (\p_2 \Gvf_3)^2 )- \p_2 \Gvf_3 (g_{12} - \p_1 \Gvf_3 \p_2 \Gvf_3) \right](u) (u_2 - v_2)  \\
&= (g_{11} \p_2 \Gvf_3  - g_{12} \p_1 \Gvf_3)(u) (u_1-v_1) - (g_{22} \p_1 \Gvf_3 - g_{21}\p_2\Gvf_3)(u)(u_2-v_2).
\end{align*}
This proves \eqnref{K12}. The identities \eqnref{K13} and \eqnref{K23} can be proved similarly.

Choose open sets $U_j$ ($j=1,2$) in $\p\GO$ so that $\ol{U_1} \subset U_2$ and $\ol{U_2} \subset U$. Let $\chi_j$ ($j=1,2$) be a smooth functions such that $\chi_1=1$ in $U_1$, $\mbox{supp} (\chi_1) \subset U_2$, $\chi_2=1$ in $U_2$, and $\mbox{supp} (\chi_2) \subset U$. We denote by $M_j$ the multiplication operator by $\chi_j$, {\it i.e.},
\beq
M_j [f](x) = \chi_j(x) f(x).
\eeq
Then we have
\begin{align}
& (M_2 T_{ij} M_1)[f](\Phi(u)) \nonumber \\
&= \chi_2(\Phi(u)) \int_{\Rbb^2} \frac{K_{ij} (\Phi(u), \Phi(v))}{2\pi |\Phi(u)-\Phi(v)|^{3}} \chi_1(\Phi(v)) f(\Phi(v)) |(\p_1 \Phi \times \p_2 \Phi)(v)| \, dv. \label{M2TM1}
\end{align}
According to \eqnref{BxBy-3} and \eqnref{K12}, we have
\begin{align}
& \frac{K_{12} (\Phi(u), \Phi(v))}{|\Phi(u)-\Phi(v)|^{3}} |(\p_1 \Phi \times \p_2 \Phi)(v)| \nonumber \\
& = \left[ (g_{11} \p_2 \Gvf_3  - g_{12} \p_1 \Gvf_3)(u) (u_1-v_1) - (g_{22} \p_1 \Gvf_3 - g_{21}\p_2\Gvf_3)(u)(u_2-v_2) \right] L(u, u-v) \nonumber \\
& \qquad   + E(u, v), \label{K12-2}
\end{align}
where the error term $E(u, v)$ satisfies
\beq\label{Euv}
|E(u, v)| \lesssim |u-v|^{-1}.
\eeq

We now introduce the surface Riesz transform $R_j^g$, $j=1,2$: it is defined by
\beq\label{Rgdef}
R_j^g[f](u) =\frac{1}{2\pi}  \int_{\Rbb^2} {(u_j-v_j)}L(u, u-v) f(v) dv.
\eeq
It then follows from \eqnref{M2TM1} and \eqnref{K12-2} that
\begin{align*}
&(M_2 T_{12} M_1)[f](\Phi(u)) \\
&= \chi_2(\Phi(u)) \left[ (g_{11} \p_2 \Gvf_3  - g_{12} \p_1 \Gvf_3)(u) R_1^g - (g_{22} \p_1 \Gvf_3 - g_{21}\p_2\Gvf_3)(u) R_2^g \right] [ (\chi_1 \circ \Phi) (f \circ \Phi) ] \\
& \quad + \chi_2(\Phi(u)) \int_{\Rbb^2} E(u,v) \chi_1(\Phi(v)) f(\Phi(v))  \, dv.
\end{align*}
Let $\widetilde{M}_j$ be the multiplication operator by $\chi_j(\Phi(u))$ for $j=1,2$, and define $\tau$ by
\beq
\tau[f](u)= f(\Phi(u)).
\eeq
Since the operator defined by $E(u,v)$ is compact thanks to \eqnref{Euv}, we have
\beq\label{T12equiv}
\tau M_2 T_{12} M_1 \equiv \widetilde{T}_{12} \tau,
\eeq
where
\beq
\widetilde{T}_{12} := \widetilde{M_2} \left[ (g_{11} \p_2 \Gvf_3  - g_{12} \p_1 \Gvf_3) R_1^g - (g_{22} \p_1 \Gvf_3 - g_{21}\p_2\Gvf_3) R_2^g \right] \widetilde{M_1} .
\eeq
Likewise, we obtain
\beq\label{T13equiv}
\tau M_2 T_{13} M_1 \equiv \widetilde{T}_{13} \tau \quad\text{and}\quad
\tau M_2 T_{23} M_1 \equiv \widetilde{T}_{23} \tau,
\eeq
where
\begin{align}
\widetilde{T}_{13} &:= \widetilde{M_2} \left[ (g_{11} \p_2 \Gvf_2  - g_{12} \p_1 \Gvf_2) R_1^g - (g_{22} \p_1 \Gvf_2 - g_{21}\p_2\Gvf_2) R_2^g \right] \widetilde{M_1} , \\
\widetilde{T}_{23} &:= \widetilde{M_2} \left[ (g_{11} \p_2 \Gvf_1  - g_{12} \p_1 \Gvf_1) R_1^g - (g_{22} \p_1 \Gvf_1 - g_{21}\p_2\Gvf_1) R_2^g \right] \widetilde{M_1} .
\end{align}

%%%%%%%%%%%%%%%%%%%%%%%%%%%%%%%%%%%%%%%%%%%%%%%%%%%%%%%%%%%%%%%%%%%%%%%%%%
\section{Principal symbols of $\widetilde{T}_{ij}$}\label{sec:symbol}
%%%%%%%%%%%%%%%%%%%%%%%%%%%%%%%%%%%%%%%%%%%%%%%%%%%%%%%%%%%%%%%%%%%%%%%%%%

In this section we realize $\widetilde{T}_{ij}$ as a $\psi$DO and calculate its symbol.
The classical $\psi$DO is defined as
$$
Op(\Gs)[f](u)=\frac{1}{(2\pi)^2}  \int_{\Rbb^2} \int_{\Rbb^2} \Gs(u, \xi) e^{i(u-v)\cdot \xi} f(v) \, dv d\xi,
$$
where $\Gs(u, \xi)$ is called the symbol of $Op(\Gs)$.

We first compute the symbol of the surface Riesz transform:
\begin{lemma}\label{symbolRG}
Let $G(u)^{-1}=(g^{ij}(u))$ be the inverse of the metric tensor $G=(g_{ij})$. Then the symbol $p_j(u, \xi)$ \emph{($u \in D$)} of $R_j^g$ is given by
\beq
p_j(u, \xi)= \frac{-i}{\sqrt{\det (g_{jk}(u))}}\frac{\sum_{k} g^{jk}(u)\xi_k}{\sqrt{\sum_{i, j} g^{jk}(u)\xi_j \xi_k}} , \quad j=1,2.
\eeq
\end{lemma}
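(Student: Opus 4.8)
The plan is to treat $R_j^g$ as a convolution-type operator whose coefficient is frozen at the output point, so that its symbol is literally the Fourier transform of the kernel in the difference variable, and then to reduce the resulting integral to the classical Euclidean Riesz transform by a linear change of variables flattening the quadratic form $\la w, G(u)w\ra$. First I would rewrite \eqnref{Rgdef} as
\beq
R_j^g[f](u) = \int_{\Rbb^2} k_j(u, u-v)\, f(v)\, dv, \qquad k_j(u, w) := \frac{1}{2\pi}\,\frac{w_j}{\la w, G(u) w\ra^{3/2}} .
\eeq
For each fixed $u$ the kernel $k_j(u, \cdot)$ is odd and positively homogeneous of degree $-2$ on $\Rbb^2$, hence has vanishing mean over the unit circle and is a Calder\'on--Zygmund kernel; thus $R_j^g$ is a classical $\psi$DO of order $0$. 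Writing $w = u-v$ and inserting the Fourier inversion formula for $f$ in the normalization $\widehat f(\xi) = \int f(v) e^{-iv\cdot\xi}\, dv$ implicit in the paper's definition of $Op(\Gs)$, one finds by the usual reduction that the symbol is the principal-value Fourier transform
\beq
p_j(u, \xi) = \int_{\Rbb^2} k_j(u, w)\, e^{-i w\cdot\xi}\, dw ;
\eeq
since $k_j(u,\cdot)$ is exactly homogeneous, this is the full symbol and is homogeneous of degree $0$ in $\xi$.

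Next I would diagonalize the quadratic form: let $A := G(u)^{1/2}$ be the symmetric positive-definite square root, so $\la w, G(u)w\ra = |Aw|^2$, and substitute $w = A^{-1}z$. Then $dw = (\det G(u))^{-1/2}\, dz$, $w_j = \sum_k (A^{-1})_{jk} z_k$, and $w\cdot\xi = z\cdot(A^{-1}\xi)$, so with $\eta := A^{-1}\xi$,
\beq
p_j(u, \xi) = \frac{1}{\sqrt{\det G(u)}}\, \sum_k (A^{-1})_{jk} \int_{\Rbb^2} \frac{1}{2\pi}\,\frac{z_k}{|z|^3}\, e^{-i z\cdot\eta}\, dz .
\eeq
Applying the classical two-dimensional Riesz-transform identity $\int_{\Rbb^2}\frac{1}{2\pi}\, z_k |z|^{-3}\, e^{-i z\cdot\eta}\, dz = -i\,\eta_k/|\eta|$ (the normalization $\Gamma(3/2)/\pi^{3/2} = 1/(2\pi)$ in \eqnref{Rgdef} is precisely the Riesz normalization, the same one under which $R_1^2 + R_2^2 = -\BI$ in \eqnref{halfspace}) gives $p_j(u,\xi) = \frac{-i}{\sqrt{\det G(u)}}\sum_k (A^{-1})_{jk}\,\eta_k/|\eta|$. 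Finally $\sum_k (A^{-1})_{jk}\eta_k = (A^{-2}\xi)_j = (G(u)^{-1}\xi)_j = \sum_k g^{jk}(u)\xi_k$ and $|\eta|^2 = |A^{-1}\xi|^2 = \la\xi, G(u)^{-1}\xi\ra = \sum_{j,k} g^{jk}(u)\xi_j\xi_k$; since $\det(g_{jk}(u)) = \det G(u)$ this is exactly the stated formula, and as a check $G(u) = \BI$ recovers the flat Riesz multiplier $-i\xi_j/|\xi|$.

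I do not expect a deep obstacle here: the substance is the short computation above. The points needing care are purely bookkeeping --- keeping the linear substitution straight so that the frequency variable transforms by $A^{-1}$ (and not $A$ or $A^{\mathrm{T}}$), which is what makes the numerator emerge as $G(u)^{-1}\xi$ rather than $G(u)\xi$ --- together with the routine justification of the principal-value Fourier transform of the homogeneous singular kernel $k_j(u,\cdot)$, which is standard in view of its oddness and degree $-2$ homogeneity and can be quoted from the standard references on singular integral operators.
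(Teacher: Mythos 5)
Your argument is correct and essentially reproduces the paper's proof: both reduce the symbol to the Fourier transform of the frozen-coefficient kernel $\tfrac{1}{2\pi} z_j\langle z, G(u)z\rangle^{-3/2}$, flatten the quadratic form by a linear change of variables, and invoke the Euclidean Riesz multiplier $-i\xi_j/|\xi|$. The only cosmetic difference is the choice of factorization of $G(u)$: you use the symmetric square root $A=G(u)^{1/2}$, while the paper diagonalizes $G$ by an orthogonal matrix and takes $Q=P\,\mathrm{diag}(\alpha^{-1},\beta^{-1})$ with $Q^{T}GQ=\BI$; both give $QQ^{T}=G^{-1}$ (resp.\ $A^{-2}=G^{-1}$) and lead to the same formula.
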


\begin{proof}
We obtain from the definition of the surface Riesz transform \eqnref{Rgdef} that
\begin{align*}
\frac{1}{2\pi} \int_{\Rbb^2} (u_j-v_j)L(u, u-v) f(v) dv &=\frac{1}{(2\pi)^2}  \int_{\Rbb^2} \int_{\Rbb^2} e^{i(u-v)\cdot \xi} p_j(u, \xi) d\xi f(v) dv  \\
&= \frac{1}{(2\pi)^2} \int_{\Rbb^2} e^{iu\cdot \xi} p_j(u, \xi) \hat{f}(\xi) d\xi.
\end{align*}
So $p_j(u, \xi)$ is the Fourier transform of $\frac{1}{2\pi} z_j L(u, z)$ with respect to $z$-variables, namely,
\begin{align}
p_j(u, \xi) &= \frac{1}{2\pi} \int_{\Rbb^2} z_j L(u, z) e^{-iz \cdot \xi} dz \nonumber \\
& = \frac{1}{2\pi} \int_{\Rbb^2} \frac{z_j}{\la z, G(u)z \ra^{3/2}} e^{-iz \cdot \xi} dz. \label{symbol}
\end{align}

The metric tensor is diagonalizable by an orthogonal matrix, that is, there is an orthogonal matrix $P(u)$ and functions $\Ga(u)$ and $\Gb(u)$ such that
\beq
P^{-1}(u) G(u) P(u)=
\begin{bmatrix}
\Ga^2(u) &  0 \\
 0 & \Gb^2(u)
\end{bmatrix}
\eeq
or
\beq
\begin{bmatrix}
\Ga^{-1}(u) &  0 \\
 0 & \Gb^{-1}(u)
\end{bmatrix}
P^{-1}(u) G(u) P(u)
\begin{bmatrix}
\Ga^{-1}(u) &  0 \\
 0 & \Gb^{-1}(u)
\end{bmatrix}=
\begin{bmatrix}
1 &  0 \\
0 & 1
\end{bmatrix}.
\eeq
Let
$$
Q(u): =P(u)
\begin{bmatrix}
\Ga^{-1}(u) &  0 \\
 0 & \Gb^{-1}(u)
\end{bmatrix},
$$
and introduce new variables $w$ by
\beq
z=Q(u)w.
\eeq
Note that $\Ga(u)\Gb(u)=\sqrt{\det (g_{jk}(u))}$ and $\la z, G(u)z \ra^{3/2}=|w|^3$. Thus we have
\eqnref{symbol} that
\begin{align*}
p_j(u, \xi)&=\int_{\Rbb^2} \frac{1}{2\pi} \frac{[Q(u) w]_j}{|w|^3}
e^{-i[Q(u) w] \cdot \xi} \frac{dw}{\Ga(u)\Gb(u)} \\
&=\frac{1}{\sqrt{\det (g_{jk}(u))}} \int_{\Rbb^2}
\frac{1}{2\pi} \frac{[Q(u) w]_j}{|w|^3}
e^{-iw \cdot [Q(u)^{T}  \xi]} {dw} \\
&=\frac{-i}{\sqrt{\det (g_{jk}(u))}}  \frac{[Q(u) Q(u)^{T}  \xi]_j}{|Q(u)^{T}  \xi|} \\
&=\frac{-i}{\sqrt{\det (g_{jk}(u))}}\frac{\sum_{k} g^{jk}(u)\xi_k}{\sqrt{\sum_{j, k} g^{jk}(u)\xi_j \xi_k}}
\end{align*}
as desired.
\end{proof}

Lemma \ref{symbolRG} shows that the symbol of the operator $\widetilde{M}_2 R_j^g$ is given by $\chi_2(\Phi(u))p_j(u, \xi)$, which is of homogeneous of order $0$ in $\xi$-variable. If we mollify the homogeneous symbol, {\it i.e.}, if we let
$$
\widetilde{p}_j(u, \xi):= \chi_2(\Phi(u))p_j(u, \xi) \eta (\xi),
$$
where $\eta$ is a smooth function such that $\eta(\xi)=0$ for $|\xi| \le 1/2$ and $\eta(\xi)=1$ for $|\xi|>1$, then $\widetilde{p}_j$
belongs to $\Scal^{0}(\Rbb^2_x \times \Rbb^2_{\xi})$, the classical symbol class of order $0$. Furthermore, $Op(\widetilde{p}_j) - \widetilde{M}_2 R_j^g$ is a compact operator as one can see easily. In other words, $Op(\widetilde{p}_j) \equiv \widetilde{M}_2 R_j^g$.

So we may apply the product formula for calculus of $\psi$DO. In particular, if $\Gs_1$, $\Gs_2 \in \Scal^{0}(\Rbb^2_x \times \Rbb^2_{\xi})$, then the symbol of $Op(\Gs_1)Op(\Gs_2)$ is given by
\beq
\Gs_1(u,\xi)\Gs_2(u,\xi) + \Gs_3(u,\xi),
\eeq
where $\Gs_3 \in \Scal^{-1}(\Rbb^2_x \times \Rbb^2_{\xi})$ (see \cite{Shubin}). In particular, $Op(\Gs_3)$ is compact on $H^s$-space. So in notation of this paper we have
\beq
Op(\Gs_1)Op(\Gs_2)\equiv Op(\Gs_1 \Gs_2).
\eeq

\begin{lemma}\label{symbolT}
The principal symbols of $\widetilde{T}_{12}$, $\widetilde{T}_{13}$ and $\widetilde{T}_{23}$ are respectively given by
\begin{align}
\Gs_{12}(u, \xi):= \frac{-i\chi_1(u)\sqrt{\det(g^{jk}(u))}(\p_2 \Gvf_3(u) \xi_1 - \p_1\Gvf_3(u) \xi_2)}{\sqrt{\sum_{j, k} g^{jk}(u)\xi_j \xi_k}}, \\
\Gs_{13}(u, \xi):= \frac{-i\chi_1(u)\sqrt{\det(g^{jk}(u))}(\p_2 \Gvf_2(u) \xi_1 - \p_1\Gvf_2(u) \xi_2)}{\sqrt{\sum_{j, k} g^{jk}(u)\xi_j \xi_k}}, \\
\Gs_{23}(u, \xi):= \frac{-i\chi_1(u)\sqrt{\det(g^{jk}(u))}(\p_2 \Gvf_1(u) \xi_1 - \p_1\Gvf_1(u) \xi_2)}{\sqrt{\sum_{j, k} g^{jk}(u)\xi_j \xi_k}}.
\end{align}
\end{lemma}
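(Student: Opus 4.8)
The plan is to combine the approximation results of Section \ref{sec:Riesz} with the symbol calculus already set up. Recall from \eqnref{T12equiv} that $\tau M_2 T_{12} M_1 \equiv \widetilde{T}_{12}\tau$, where
\[
\widetilde{T}_{12} = \widetilde{M_2}\left[(g_{11}\p_2\Gvf_3 - g_{12}\p_1\Gvf_3)R_1^g - (g_{22}\p_1\Gvf_3 - g_{21}\p_2\Gvf_3)R_2^g\right]\widetilde{M_1}.
\]
First I would observe that, modulo compact operators, $\widetilde{M_2}R_j^g \equiv Op(\widetilde p_j)$ by the discussion following Lemma \ref{symbolRG}, so that $\widetilde{T}_{12}$ is itself a $\psi$DO of order $0$ up to a compact remainder. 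Its principal symbol is then obtained by multiplying the symbols: the multiplication operators $\widetilde{M_1}$, and the smooth coefficients $g_{1k}\p_\ell\Gvf_3$, contribute symbols of order $0$ (independent of $\xi$), and by the product formula $Op(\Gs_1)Op(\Gs_2)\equiv Op(\Gs_1\Gs_2)$ the lower-order terms $\Gs_3 \in \Scal^{-1}$ are discarded. Hence the principal symbol of $\widetilde{T}_{12}$ is
\[
\chi_1(u)\,\big[(g_{11}\p_2\Gvf_3 - g_{12}\p_1\Gvf_3)(u)\,p_1(u,\xi) - (g_{22}\p_1\Gvf_3 - g_{21}\p_2\Gvf_3)(u)\,p_2(u,\xi)\big],
\]
where I have used $\chi_2\equiv 1$ on the support of $\chi_1$ to absorb the $\chi_2(\Phi(u))$ factors coming from $\widetilde{M_2}$ into the single cutoff $\chi_1(u)$ (writing $\chi_j(u)$ for $\chi_j(\Phi(u))$).

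The second step is a purely algebraic simplification using the explicit formula for $p_j$ from Lemma \ref{symbolRG}. Substituting
\[
p_j(u,\xi) = \frac{-i}{\sqrt{\det(g_{jk}(u))}}\,\frac{\sum_k g^{jk}(u)\xi_k}{\sqrt{\sum_{j,k} g^{jk}(u)\xi_j\xi_k}},
\]
the common factors $-i/\sqrt{\det(g_{jk})}$ and the denominator $\sqrt{\sum g^{jk}\xi_j\xi_k}$ pull out, and I am left to show that
\[
(g_{11}\p_2\Gvf_3 - g_{12}\p_1\Gvf_3)\sum_k g^{1k}\xi_k - (g_{22}\p_1\Gvf_3 - g_{21}\p_2\Gvf_3)\sum_k g^{2k}\xi_k = \det(g^{jk})\cdot\det(g_{jk})\,(\p_2\Gvf_3\,\xi_1 - \p_1\Gvf_3\,\xi_2),
\]
using $1/\sqrt{\det(g_{jk})} = \sqrt{\det(g^{jk})}$ and $\det(g^{jk})\det(g_{jk}) = 1$ to match the stated form. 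This reduces to expanding $G^{-1} = \frac{1}{\det G}\begin{bmatrix} g_{22} & -g_{12} \\ -g_{21} & g_{11}\end{bmatrix}$: the coefficient of $\xi_1$ on the left becomes $\frac{1}{\det G}\big[(g_{11}\p_2\Gvf_3 - g_{12}\p_1\Gvf_3)g_{22} - (g_{22}\p_1\Gvf_3 - g_{21}\p_2\Gvf_3)(-g_{21})\big]$, which collapses (using $g_{12}=g_{21}$) to $\p_2\Gvf_3\,(g_{11}g_{22}-g_{12}^2)/\det G = \p_2\Gvf_3$, and similarly the coefficient of $\xi_2$ collapses to $-\p_1\Gvf_3$. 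Tracking the $-i\chi_1(u)$ prefactor and the surviving $1/\sqrt{\det(g_{jk})}=\sqrt{\det(g^{jk})}$ then yields exactly $\Gs_{12}(u,\xi)$ as stated. The formulas for $\Gs_{13}$ and $\Gs_{23}$ follow verbatim with $\Gvf_3$ replaced by $\Gvf_2$ and $\Gvf_1$ respectively, since \eqnref{T13equiv} has the same structure.

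I do not expect any serious obstacle here; the lemma is essentially bookkeeping. The one point deserving care is the justification that all discarded terms are genuinely compact on the relevant $H^s$-space: this requires that $\widetilde p_j \in \Scal^0$ (guaranteed by the mollification with $\eta$), that the product-formula remainder lies in $\Scal^{-1}$ (so $Op(\Gs_3)$ is compact, as noted after the product formula), and that the error operator with kernel $E(u,v)$ from \eqnref{Euv} is compact — all of which are already established in the excerpt. The only mild subtlety is the consistent identification of $\chi_j(\Phi(u))$ with $\chi_j(u)$ in the notation of the lemma and the fact that on $\mathrm{supp}\,\chi_1$ one has $\chi_2\equiv 1$, so that the statement legitimately involves only $\chi_1$; I would state this explicitly at the start of the proof.
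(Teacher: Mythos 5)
Your proposal is correct and follows essentially the same route as the paper: apply the product formula for $\psi$DOs together with Lemma \ref{symbolRG}, multiply the symbols, and then carry out the algebraic simplification using the cofactor form of $G^{-1}$ and $\chi_2\chi_1=\chi_1$. The only cosmetic difference is that you expand $\sum_k g^{jk}\xi_k$ directly via $G^{-1}=\frac{1}{\det G}\begin{pmatrix} g_{22} & -g_{12}\\ -g_{21} & g_{11}\end{pmatrix}$, whereas the paper first rewrites the $g_{ij}$ coefficients in terms of $g^{ij}$ and factors out $g^{11}g^{22}-g^{12}g^{21}$; both computations collapse to the same principal symbol.
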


\begin{proof}
We only give a proof for $\widetilde{T}_{12}$. The other two cases can be proved in the same way.

By the product rule of $\psi$DO and Lemma \ref{symbolRG}, the principal symbol of $\widetilde{T}_{12}$, after mollification, is given by
\begin{align*}
&\frac{-i\chi_2(\Phi(u))\chi_1(\Phi(u))}{\sqrt{\det (g_{jk}(u))}\sqrt{\sum_{j, k} g^{jk}(x)\xi_j \xi_k}} \Big[ (\p_2\Gvf_3(u) g_{11} - \p_1\Gvf_3(u) g_{12}) (g^{11}\xi_1+g^{12}\xi_2) \\
& \qquad\qquad\qquad + (\p_2\Gvf_3(u) g_{12} -\p_1\Gvf_3(u) g_{22})  (g^{21}\xi_1+g^{22}\xi_2) \Big] .
\end{align*}
Since $\chi_2\chi_1=\chi_1$ and
\begin{align*}
g^{11}=\frac{g_{22}}{\det(g_{ij}(x))}, \quad g^{12}=g^{21}=-\frac{g_{12}}{\det(g_{ij}(x))}, \quad
g^{22}=\frac{g_{11}}{\det(g_{ij}(x))},
\end{align*}
we see that the above formula equals to
\begin{align*}
&\frac{-i\chi_1(\Phi(u))\sqrt{\det (g_{jk}(u))}}{\sqrt{\sum_{j, k} g^{jk}(x)\xi_j \xi_k}} \Big[ (\p_2\Gvf_3(u) g^{22} + \p_1\Gvf_3(u) g^{12}) (g^{11}\xi_1+g^{12}\xi_2) \\
& \qquad\qquad\qquad + (-\p_2\Gvf_3(u) g^{12} -\p_1\Gvf_3(u) g^{11})  (g^{21}\xi_1+g^{22}\xi_2) \Big] \\
&= \frac{-i\chi_1(\Phi(u))\sqrt{\det (g_{jk}(u))}}{\sqrt{\sum_{j, k} g^{jk}(x)\xi_j \xi_k}} (g^{11} g^{22} -g^{12} g^{21}) (\p_2\Gvf_3(u)  \xi_1 - \p_1\Gvf_3(u) \xi_2) \\
&= \frac{-i\chi_1(\Phi(u)) \sqrt{\det(g^{jk}(u))}(\p_2\Gvf_3(u)  \xi_1 - \p_1\Gvf_3(u) \xi_2)}{\sqrt{\sum_{j, k} g^{jk}(u)\xi_j \xi_k}}.
\end{align*}
This completes the proof.
\end{proof}

Let $\Gs_{ij}$ be the symbols appeared in Lemma \ref{symbolT}, and let
\beq
X_{ij}:= Op(\Gs_{ij}).
\eeq
Lemma \ref{symbolT} shows that
\beq
\widetilde{T}_{ij} \equiv X_{ij}.
\eeq
Let
\beq\label{Rdef}
\BR =
\begin{bmatrix}
0 & X_{12} & X_{13} \\
-X_{12} & 0 & X_{12} \\
-X_{13} & -X_{12} & 0
\end{bmatrix}.
\eeq
We then infer from \eqnref{T12equiv} and \eqnref{T13equiv} that
\beq\label{tauform}
\tau M_2 \BT M_1 \equiv \BR \tau.
\eeq

%%%%%%%%%%%%%%%%%%%%%%%%%%%%%%%%%%%%%%%%%%%%%%%%%%%%%%%%%%%%%%%%%%%%%%%%%
\section{Proofs of main theorems}\label{sec:proof}
%%%%%%%%%%%%%%%%%%%%%%%%%%%%%%%%%%%%%%%%%%%%%%%%%%%%%%%%%%%

We now prove Theorem \ref{Polynomially compact} and \ref{mainthm1}. We first prove the following lemma.

\begin{lemma}\label{BTM compact}
The operator $\BR^3 - \widetilde{M}_1^2 \BR$ is compact on $H^{-1/2}(\Rbb^2)$, but $\BR(\BR - \widetilde{M}_1 \BI)$, $\BR(\BR + \widetilde{M}_1 \BI)$, and $\BR^2 - \widetilde{M}_1^2 \BI$ are not compact.
\end{lemma}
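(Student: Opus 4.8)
The plan is to carry out the whole argument at the level of principal symbols. By the product formula for $\psi$DOs recalled above---the composition of two operators in $\Scal^0$ has symbol equal to the product of the symbols modulo $\Scal^{-1}$, and $Op(\Scal^{-1})$ is compact here thanks to the cut-offs $\chi_1,\chi_2$ built into the $X_{ij}$---we have $\BR\equiv Op(\Gs)$, where $\Gs=\Gs(u,\xi)$ is the antisymmetric matrix whose entries are the principal symbols $\Gs_{12},\Gs_{13},\Gs_{23}$ of Lemma \ref{symbolT}. Since $\widetilde{M}_1$ and $\BI$ belong to $\Scal^0$ with symbol $\chi_1(\Phi(u))\,I$, each operator in the statement is, modulo a compact operator, of the form $Op$ of a polynomial expression in $\Gs$ and $\chi_1 I$; explicitly $\BR^3-\widetilde{M}_1^2\BR\equiv Op(\Gs^3-\chi_1^2\Gs)$, $\BR(\BR\mp\widetilde{M}_1\BI)\equiv Op(\Gs^2\mp\chi_1\Gs)$ and $\BR^2-\widetilde{M}_1^2\BI\equiv Op(\Gs^2-\chi_1^2 I)$.

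The crucial point is the scalar identity
\[
\Gs_{12}(u,\xi)^2+\Gs_{13}(u,\xi)^2+\Gs_{23}(u,\xi)^2=-\chi_1(u)^2 ,
\]
which is the curved-surface analogue of $R_1^2+R_2^2=-I$. By Lemma \ref{symbolT} each of the three symbols equals $-i\,\chi_1\sqrt{\det(g^{jk})}\,(\p_2\Gvf_m\,\xi_1-\p_1\Gvf_m\,\xi_2)\big/\sqrt{\sum g^{jk}\xi_j\xi_k}$ with $m=3,2,1$ respectively; summing the squares and using $\sum_m(\p_1\Gvf_m)^2=g_{11}$, $\sum_m\p_1\Gvf_m\p_2\Gvf_m=g_{12}$, $\sum_m(\p_2\Gvf_m)^2=g_{22}$ turns the quadratic form in $\xi$ into $g_{22}\xi_1^2-2g_{12}\xi_1\xi_2+g_{11}\xi_2^2=\det(g_{ij})\sum g^{jk}\xi_j\xi_k$; since $\det(g^{jk})=\det(g_{ij})^{-1}$, the whole expression collapses to $-\chi_1^2$.

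Next I would invoke the elementary fact that a $3\times 3$ antisymmetric matrix $\Gs$ is the cross-product matrix of a vector $\Bw$ (built from $\Gs_{12},\Gs_{13},\Gs_{23}$ up to sign, so $\Bw\cdot\Bw=\Gs_{12}^2+\Gs_{13}^2+\Gs_{23}^2=-\chi_1^2$), whence
\[
\Gs^2=\Bw\Bw^T-(\Bw\cdot\Bw)\,I=\Bw\Bw^T+\chi_1^2 I ,\qquad \Gs^3=-(\Bw\cdot\Bw)\,\Gs=\chi_1^2\,\Gs .
\]
The second equality gives $\Gs^3-\chi_1^2\Gs=0$, so $\BR^3-\widetilde{M}_1^2\BR$ has vanishing principal symbol; being then a classical $\psi$DO of order $\le -1$ whose symbol is supported in a fixed compact set in the $u$-variable, it is compact on $H^{-1/2}(\Rbb^2)$, which is the first assertion. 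For the other three operators, on the open set $U_1$ where $\chi_1\equiv 1$ their principal symbols are $\Gs^2-\chi_1\Gs$, $\Gs^2+\chi_1\Gs$ and $\Gs^2-\chi_1^2 I=\Bw\Bw^T$; taking traces, and using $\operatorname{tr}\Gs=0$ and $\operatorname{tr}(\Bw\Bw^T)=\Bw\cdot\Bw=-\chi_1^2$, these traces equal $2\chi_1^2$, $2\chi_1^2$ and $-\chi_1^2$, hence are nonzero on $U_1$. So each of these three matrix symbols is nonzero at any $(u_0,\xi_0)$ with $u_0\in U_1$ and $\xi_0\neq 0$, and therefore has some scalar entry not vanishing there. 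Since a classical $\psi$DO of order $0$ with nonvanishing principal symbol at such a point cannot be compact (a standard fact, obtained by testing against a sequence concentrating in frequency at $\xi_0$; cf.\ \cite{Shubin}), none of $\BR(\BR-\widetilde{M}_1\BI)$, $\BR(\BR+\widetilde{M}_1\BI)$, $\BR^2-\widetilde{M}_1^2\BI$ is compact.

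The step I expect to be the main obstacle is the symbol identity $\Gs_{12}^2+\Gs_{13}^2+\Gs_{23}^2=-\chi_1^2$: although the computation is short once the formulas of Lemma \ref{symbolT} and the metric identities are in hand, it is precisely the point where all dependence on the chart $\Phi$ and on the covariable $\xi$ must cancel, and it is the mechanism behind the polynomial compactness of $\BR$ (hence of $\BK$). The only other delicate point is bookkeeping: ensuring that ``$\Scal^{-1}$'' genuinely yields an operator that is compact on the non-compact base $\Rbb^2$, and that the product formula applies to the (mollified) homogeneous symbols, both of which are handled exactly as in Section \ref{sec:symbol}.
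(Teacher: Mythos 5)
Your proof is correct and follows essentially the same route as the paper: reduce everything to the principal symbol, establish the key scalar identity $\Gs_{12}^2+\Gs_{13}^2+\Gs_{23}^2=-\chi_1^2$ via the metric identities, use the cubic identity for a $3\times 3$ antisymmetric matrix (the paper invokes Cayley--Hamilton where you use the cross-product representation, but these give the same relation $\Gs^3+(\Gs_{12}^2+\Gs_{13}^2+\Gs_{23}^2)\Gs=0$), and detect non-compactness through the trace of the symbol. One small remark: your trace for $\Gs^2-\chi_1^2 I$ is $-\chi_1^2$, whereas the paper records $\chi_1^2$; your sign is the correct one, though it is immaterial since only non-vanishing matters.
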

\begin{proof}
We see from the product formulas of $\psi$DO and the Cayley-Hamilton theorem that $\BR$ given by \eqnref{Rdef} satisfies
$$
\BR^3+(X_{12}^2+X_{13}^2+X_{13}^2) \BR \equiv 0.
$$
We also see from the product formulas of $\psi$DO that the principal symbol of $X_{12}^2+X_{13}^2+X_{13}^2$ is given by
\begin{align}
&\frac{-\chi_1(\Phi(u))^2 \det(g^{jk}(u))}{{\sum_{j, k} g^{jk}(u)\xi_j \xi_k}}
\sum_{j=1}^3 (\p_2 \Gvf_j(u) \xi_1 - \p_1\Gvf_j(u) \xi_2)^2 \nonumber \\
&=\frac{-\chi_1(\Phi(u))^2 \det(g^{jk}(u))}{{\sum_{j, k} g^{jk}(u)\xi_j \xi_k}}(g_{22} \xi_1^2- 2g_{12} \xi_1 \xi_2 + g_{11}\xi_2^2) \nonumber \\
&=\frac{-\chi_1(\Phi(u))^2}{{\sum_{j, k} g^{jk}(u)\xi_j \xi_k}}(g^{11} \xi_1^2+ 2 g^{12} \xi_1 \xi_2+ g^{22}\xi_2^2) \nonumber \\
&=-\chi_1(\Phi(u))^2 . \label{sumsquare}
\end{align}
So, $\BR^3 - \widetilde{M}_1^2 \BR$ is compact.

Note that
\beq
\BR^2 =
\begin{bmatrix}
-X_{12}^2-X_{13}^2 & -X_{13}X_{23} & X_{12}X_{23} \\
-X_{23}X_{13} &  -X_{12}^2-X_{23}^2 & -X_{13}X_{12} \\
X_{23}X_{12} &  -X_{13}X_{12} & -X_{23}^2-X_{13}^2
\end{bmatrix}.
\eeq
So, in view of \eqnref{sumsquare}, we see that the principal symbols of $\text{tr}\,\BR(\BR - \widetilde{M}_1 \BI)$, $\text{tr}\,\BR(\BR + \widetilde{M}_1 \BI)$, and $\text{tr}\,(\BR^2 - \widetilde{M}_1^2 \BI)$ are respectively given by $2\chi_1(\Phi(u))^2$, $2\chi_1(\Phi(u))^2$, and $\chi_1(\Phi(u))^2$. Here $\text{tr}$ stands for the trace. So, $\text{tr}\,\BR(\BR - \widetilde{M}_1 \BI)$, $\text{tr}\,\BR(\BR + \widetilde{M}_1 \BI)$, and $\text{tr}\,(\BR^2 - \widetilde{M}_1^2 \BI)$ are non-compact, and so are $\BR(\BR - \widetilde{M}_1 \BI)$, $\BR(\BR + \widetilde{M}_1 \BI)$, and $\BR^2 - \widetilde{M}_1^2 \BI$. This completes the proof.
\end{proof}

\begin{prop}\label{prop}
The operator $\BT^3-\BT$ is compact on $\Hcal$, but $\BT (\BT - \BI)$, $\BT (\BT + \BI)$ and $(\BT^2 - \BI)$ are not compact.
\end{prop}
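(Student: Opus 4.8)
The plan is to read the local computations of Sections~\ref{sec:Riesz} and~\ref{sec:symbol} as the statement that $\BT$ is, modulo compact operators on $\Hcal$, a classical $\psi$DO of order $0$ on the closed manifold $\p\GO$ (acting on $\Rbb^3$-valued functions), and then to conclude with a short symbol computation. First I would assemble the global operator. Fix a finite atlas of $\p\GO$ and, in each chart, cutoffs $\chi_1^{(k)},\chi_2^{(k)}$ as in Section~\ref{sec:Riesz}, chosen so that $\{(\chi_1^{(k)})^2\}$ is a partition of unity. By \eqnref{tauform}, together with the fact that conjugation by the diffeomorphism $\tau_k:f\mapsto f\circ\Phi^{(k)}$ preserves the class of classical $\psi$DOs, each localized piece $M_2^{(k)}\BT M_1^{(k)}$ agrees, up to a compact operator, with a classical $\psi$DO of order $0$ supported in the $k$-th chart. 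Since $\sum_k(M_1^{(k)})^2=\BI$ and the commutators $[\BT,\chi_1^{(k)}]$ — indeed $[p(\BT),\chi_1^{(k)}]$ for any polynomial $p$ — have order $-1$ and are therefore compact, patching these pieces shows that $\BT$ is, modulo compacts, a classical $\psi$DO of order $0$ on $\p\GO$ whose principal symbol is the $3\times 3$ antisymmetric matrix $\GS(x,\xi)$ with entries the $\Gs_{ij}$ of Lemma~\ref{symbolT} taken with $\chi_1\equiv 1$.

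Granting this, the remainder is algebra with principal symbols. The operators $\BT^3-\BT$, $\BT(\BT\mp\BI)$ and $\BT^2-\BI$ are then, modulo compacts, classical $\psi$DOs of order $0$ with principal symbols $\GS^3-\GS$, $\GS^2\mp\GS$ and $\GS^2-\BI$, and by \eqnref{sumsquare} the entries of $\GS$ satisfy $(\Gs_{12})^2+(\Gs_{13})^2+(\Gs_{23})^2=-1$ on $T^*\p\GO\setminus 0$ (all with $\chi_1\equiv 1$). For a $3\times 3$ antisymmetric matrix $\GS=\begin{bmatrix}0&a&b\\-a&0&c\\-b&-c&0\end{bmatrix}$ the Cayley--Hamilton relation is $\GS^3=-(a^2+b^2+c^2)\,\GS$, so here $\GS^3=\GS$; hence $\BT^3-\BT$ has vanishing principal symbol and is a $\psi$DO of order $-1$. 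Such an operator maps $H^{-1/2}(\p\GO)$ continuously into $H^{1/2}(\p\GO)$, and the embedding $H^{1/2}(\p\GO)\hookrightarrow H^{-1/2}(\p\GO)$ is compact, so $\BT^3-\BT$ is compact on $\Hcal$.

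For the non-compactness I would use $\mathrm{tr}\,\GS=0$ and $\mathrm{tr}\,\GS^2=-2(a^2+b^2+c^2)=2$, so that the principal symbols of $\BT(\BT-\BI)$, $\BT(\BT+\BI)$ and $\BT^2-\BI$ have traces $2$, $2$ and $-1$ respectively and thus vanish nowhere on $T^*\p\GO\setminus 0$. A classical $\psi$DO of order $0$ on a compact manifold is compact on $L^2$ only if its principal symbol vanishes identically (otherwise, concentrating in phase space near a point where the symbol is nonzero yields a weakly null sequence whose images do not tend to $0$); hence none of $\BT(\BT-\BI)$, $\BT(\BT+\BI)$, $\BT^2-\BI$ is compact. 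Since $\BK\equiv k_0\BT$ by \eqnref{BKBT}, this also proves Theorem~\ref{Polynomially compact}: $p_3(\BK)\equiv k_0^3(\BT^3-\BT)$ is compact, while $\BK(\BK\mp k_0\BI)\equiv k_0^2\,\BT(\BT\mp\BI)$ and $\BK^2-k_0^2\BI\equiv k_0^2(\BT^2-\BI)$ are not.

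The step I expect to cost the most is the first paragraph: making rigorous that the chartwise formulas of Sections~\ref{sec:Riesz}--\ref{sec:symbol} assemble into a genuinely globally-defined $\psi$DO on $\p\GO$ with the claimed principal symbol — in particular, checking that the symbol expressions of Lemma~\ref{symbolT} in overlapping charts are compatible under change of coordinates and that every localization error (from the cutoffs, from the commutators, and from chart overlaps) is compact. This packaging can be avoided: one may instead work directly with the partition of unity and Lemma~\ref{BTM compact}, using that $M_1^{(k)}\BT(\BI-M_2^{(k)})$ and $(\BI-M_2^{(k)})\BT M_1^{(k)}$ have smooth kernels (hence are compact) to rewrite $M_1^{(k)}\BT^3 M_1^{(k)}$ as a product of localized pieces, transferring each to $H^{-1/2}(\Rbb^2)$ via \eqnref{tauform}, and applying Lemma~\ref{BTM compact} in each chart before summing over $k$; this is more hands-on but relies only on what is already established.
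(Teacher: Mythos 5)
Your argument is correct and lands on the same algebraic core — the Cayley--Hamilton identity $\GS^3=\GS$ for the antisymmetric order-$0$ symbol and the trace computations — but it takes a more packaged route than the paper does. The paper never assembles a global symbol for $\BT$ on $\p\GO$: it proves Lemma~\ref{BTM compact} about the local model $\BR$ on $H^{-1/2}(\Rbb^2)$, transfers the conclusions to $(\BT^3-\BT)M_1^3$ and $\BT(\BT\mp\BI)M_1^2$ via commutator and cutoff identities, and then obtains compactness of $\BT^3-\BT$ itself by an explicit partition-of-unity plus diagonal-subsequence argument, and non-compactness of the other three by observing that a cutoff of a compact operator would be compact. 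You instead globalize first, viewing $\BT$ modulo compacts as a classical $\psi$DO of order $0$ on the closed surface with a single principal symbol $\GS$ defined on $T^*\p\GO\setminus 0$, after which compactness of $\BT^3-\BT$ is the textbook fact that an order-$(-1)$ operator on a compact manifold is compact on $H^{-1/2}$, and non-compactness of $\BT(\BT\mp\BI)$ and $\BT^2-\BI$ is the textbook fact that an order-$0$ $\psi$DO with nowhere-vanishing principal symbol is not compact on any $H^s$. Your version is conceptually cleaner, but, as you correctly flag, the real cost is pushed into the globalization step — verifying that the chartwise formulas of Lemma~\ref{symbolT} patch into a coordinate-invariant symbol on $T^*\p\GO$ and that every cutoff and overlap error is compact — which the paper's hands-on localization avoids; your final paragraph is essentially the paper's own argument, so you have a complete fallback. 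One small remark: your trace $\mathrm{tr}(\GS^2-\BI)=-1$ is the correct value; the corresponding local quantity $\chi_1(\Phi(u))^2$ stated in Lemma~\ref{BTM compact} appears to carry a sign slip, though this affects neither proof since only non-vanishing is used.
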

\begin{proof}
According to \eqnref{tauform} and Lemma \ref{BTM compact}, we have
$$
\tau ((M_2 \BT M_1)^3 - M_1^2 (M_2 \BT M_1)) \equiv (\BR^3 - M_1^2 \BR) \tau \equiv 0.
$$
Thus we have
$$
(M_2 \BT M_1)^3 - M_1^2 (M_2 \BT M_1) \equiv 0.
$$
Since the commutator $[\BT, M_j]$ is compact and $M_2 M_1=M_1$, we have
$$
(M_2 \BT M_1)^3 - M_1^2 (M_2 \BT M_1) \equiv (\BT^3 - \BT) M_1^3,
$$
and hence $(\BT^3 - \BT) M_1^3$ is compact.

Recall that $\chi_1 =1$ in $U_1$. If $\{ f_n\}$ is a bounded sequence in $\Hcal$ whose supports lie in $U_1$, then
$$
(\BT^3 - \BT) [f_n] = (\BT^3 - \BT) M_1^3 [f_n]
$$
has a subsequence converging in $\Hcal$. Since $U_1$ is an arbitrary open set contained in a single coordinate chart, we may use the argument of partition of unity to show that $\BT^3-\BT$ is compact. In fact, let $\{ (U_k, \Gvf_k) \}_{k=1}^N$ be a partition of unity for $\p\GO$ such that each $U_k$ lies in a coordinate chart, and let $\{ f_n\}$ be a bounded sequence in $\Hcal$. Then
$$
(\BT^3 - \BT) [f_n] = \sum_{k=1}^N (\BT^3 - \BT) [\Gvf_k f_n].
$$
For each fixed $k$, $\mbox{supp}( \Gvf_k f_n) \subset U_k$. So, $(\BT^3 - \BT) [\Gvf_k f_n]$ has a convergent subsequence. Thus we can infer that there is a subsequence, say $\{ f_{n_l} \}$ such that $(\BT^3 - \BT) [\Gvf_k f_{n_l}]$ is convergent in $l$ for each $k$. So, $(\BT^3 - \BT) [f_{n_l}] = \sum_{k=1}^N (\BT^3 - \BT) [\Gvf_k f_{n_l}]$
is convergent. Hence, $\BT^3 - \BT$ is compact.

To show that $\BT (\BT - \BI)$ is not compact, we observe as before that
$$
\BT (\BT - \BI) M_1^2 \equiv (M_2 \BT M_1)^2 - M_1 (M_2 \BT M_1),
$$
and hence
$$
\tau \BT (\BT - \BI) M_1^2 \equiv (\BR^2 - \widetilde{M}_1 \BR)\tau.
$$
Since $(\BR^2 - \widetilde{M}_1 \BR)$ is non-compact by Lemma \ref{BTM compact}, so is $\BT (\BT - \BI) M_1^2$. So, $\BT (\BT - \BI)$ is non-compact.

Non-compactness of $\BT (\BT - \BI)$ and $\BT^2-\BI$ can be proved similarly.
\end{proof}

\noindent{\sl Proof of Theorem \ref{Polynomially compact}}.
Since $\BK \equiv k_0 \BT$  by \eqnref{BKBT} and $\BT^3-\BT \equiv 0$, we have
$$
\BK^3-k_0^2 \BK \equiv k_0^3 (\BT^3-\BT) \equiv 0.
$$
Moreover, since $\BT (\BT - \BI)$, $\BT (\BT + \BI)$ and $(\BT^2 - \BI)$ are non-compact,  so are $\BK (\BK-k_0 \BI)$, $\BK (\BK + k_0 \BI)$ and $\BK^2 - k_0^2 \BI$.
\qed

\medskip
\noindent{\sl Proof of Theorem \ref{mainthm1}}.
We first emphasize that $\BK$ is a self-adjoint operator on $\Hcal$ (see \cite{AJKKY-arXiv}). Denote the spectrum of $\BK$ by $\Gs(\BK)$. By the spectral mapping theorem, we have $p_3(\Gs(\BK))=\Gs(p_3(\BK))$. Since $p_3(K)$ is compact, $\Gs(p_3(\BK))$ consists of eigenvalues (of finite multiplicities) converging to $0$. So, $\Gs(\BK)$ is discrete eigenvalues and possible accumulation points $0$, $k_0$, and $-k_0$, which are zeros of $p_3(t)$.

We now show that there are actually non-empty sequences of eigenvalues converging to $0$, $k_0$, and $-k_0$, respectively. Suppose that there is no sequence of eigenvalues converging to $0$. Then all eigenvalues converge to either $k_0$ or $-k_0$. It implies that $\BK^2 - k_0^2 \BI$ is compact, which contradicts Proposition \ref{prop}. Similarly one can show that there are non-empty sequences of eigenvalues converging to $k_0$ and $-k_0$, respectively. This completes the proof. \qed

%%%%%%%%%%%%%%%%%%%%%%%%%%%%%%%%%%%%%%%%%%%%%%%%%%%%%
\section*{Conclusion}
We prove that the elastic NP operator on three dimensional bounded domains with smooth boundaries is polynomially compact, and the elastic NP spectrum consists of three non-empty sequences of eigenvalues accumulating to $0$ and $\pm k_0$.

We mention that results of this paper are obtained under the assumption that the boundary of the domain is $C^\infty$-smooth, while those in two dimension were proved for $C^{1,\Ga}$ domains. The smoothness assumption is required since the method of proofs of this paper uses calculus of $\psi$DO.
It is likely that the main results of this paper are valid for domains with $C^{1,\Ga}$ boundaries like the two dimensional case. To prove it, it is necessary to compute the compositions of surface Riesz potentials, which are singular integral operators. We will pursue this in future.

%%%%%%%%%%%%%%%%%%%%%%%%%%%%%%%%%%%%%%%%%%%%%%%%%%%%%%%%%%%%%%%%%%%%%%%%%%%%%%%%%%%%%%%%%%

\end{document}